\def\bk{\mathbb{K}} 
\def\bn{\mathbb{N}}
\def\bc{\mathbb{C}} 
\def\bff{\mathbb{F}} 
\def\h{\mathcal{H}}
\newcommand{\bdu}[1]{#1^{\sharp\sharp}}
\newcommand{\com}[1]{#1^{\prime}}
\newcommand{\dnorm}[1]{\Vert#1\Vert_{\rm dec}} 
\newcommand{\cbnorm}[1]{\Vert#1\Vert_{{\rm cb}}} 
\newcommand{\sumjn}{\sum_{j=1}^n} 
\newcommand{\sumin}{\sum_{i=1}^n}
\newcommand{\bh}{{\rm B}(\mathcal{H})}
\newcommand{\kh}{{\rm K}(\mathcal{H})}
\newcommand{\cb}[1]{{\rm CB}(#1)}
\newcommand{\matn}[1]{{\rm M}_{n}(#1)}
\newcommand{\normc}[1]{\overline{\overline{#1}}}
\newcommand{\weakc}[1]{\overline{#1}}
\newcommand{\e}[1]{{\rm E}(#1)}
\newcommand{\eb}[1]{{\rm E}_1(#1)}
\newcommand{\peb}[1]{\overline{\rm E_1(#1)}^{p.n.}}
\newcommand{\cbb}[1]{{\rm CB}_1(#1)}
\newcommand{\icb}[1]{\rm ICB{(#1)}}
\newcommand{\icbb}[1]{\rm ICB_1{(#1)}}
\newcommand{\hg}{\stackrel{h}{\otimes}}
\newcommand{\id}[1]{\mathcal{I}_{\rm c}({#1})}
\newcommand{\mtens}{\stackrel{\rm min}{\otimes}}
\newcommand{\Mtens}{\stackrel{\rm max}{\otimes}}
\newtheorem{theorem}{Theorem}[section]
\newtheorem{corollary}[theorem]{Corollary}
\newtheorem{proposition}[theorem]{Proposition}
\theoremstyle{remark}
\theoremstyle{definition}
\newtheorem{example}[theorem]{Example}
\numberwithin{equation}{section}
\begin{document}

\title[]{Pointwise approximation by elementary complete contractions} 
\author{Bojan Magajna} 
\address{Department of Mathematics\\ University of Ljubljana\\
Jadranska 21\\ Ljubljana 1000\\ Slovenia}
\email{Bojan.Magajna@fmf.uni-lj.si}

\thanks{Acknowledgment. The author is grateful to Richard Timoney for his suggestions.}

\keywords{C$^*$-algebra,   C$^*$-tensor products, ideals, elementary operators, 
point norm topology}

\subjclass[2000]{Primary 46L06, 46L07; Secondary 47B47}

\begin{abstract}A complete contraction on a C$^*$-algebra $A$, which preserves 
all closed two sided ideals $J$, can be approximated pointwise by elementary complete contractions
if and only if the induced map on  $B\otimes A/J$ is contractive for every C$^*$-algebra
$B$, ideal $J$ in $A$ and C$^*$-tensor norm on $B\otimes A/J$.
A lifting obstruction for such an approximation is also obtained.
\end{abstract}

\maketitle

\section{Introduction and notation}

We often try to  understand the structure of operators and spaces on
which they act in terms of approximation by finite
rank maps. On a C$^*$-algebra $A$, however, we may regard two-sided multiplications $x\mapsto axb$ 
as basic building blocks (instead of rank one operators).
We can try to approximate a more general 
map $\phi$ on $A$, that preserves ideals, by finite sums of two-sided multiplications, that is, by
{\em elementary operators}. {\em Here by an ideal  we will always mean a closed two-sided ideal
and we denote the set of all such ideals in $A$ by $\id{A}$.} 

The class of separable C$^*$-algebras
$A$ on which every (completely) bounded map  preserving ideals  admits  uniform 
approximation by elementary operators  turns out to be very
restrictive: it includes just finite direct sums of C$^*$-algebras of continuous sections
vanishing at $\infty$ of locally trivial C$^*$-bundles of finite type \cite{M5}. On the other hand,
each bounded linear map on an arbitrary C$^*$-algebra $A$, that preserves ideals, can be approximated
pointwise  by elementary operators 
\cite{M1}, \cite{AM}.

In this note we   study the question of 
pointwise approximation by {\em elementary complete contractions} that is, completely contractive
elementary operators. It turns out that the uniform bound on the norms of approximating operators
has a strong effect on the nature of the problem. We  note (although this  will 
not be used here)
that for a complete contraction $\phi$ on a C$^*$-algebra $A$, which preserves ideals,
and a fixed
$x\in A$ there always exists a net of elementary complete contractions $\phi_k$ on $A$
such that $\|\phi(x)-\psi_k(x)\|$ tends to $0$ \cite{M3}. However, given a finite collection 
$x_1,\ldots,x_n$ of elements, there is in general no common
approximating net for all $x_j$ simultaneously. This problem turns out to be closely 
related to the theory of C$^*$-tensor norms.

In case of a von Neumann algebra $R$, it can be deduced from results of Chatterjee and Smith \cite{CS} that
each  complete contraction on $R$, which preserves weak* closed two-sided ideals,
can be approximated by elementary complete contractions
in the point weak* topology if and only if $R$ is injective (at least if $R$ has a separable
predual). When trying to apply this result to study the approximation on general 
C$^*$-algebras (via the universal von Neumann envelopes), we have encountered difficulties  
since C$^*$-algebras are not in general locally reflexive as operator spaces. 

A motivation for this note  
was the desire to  understand the range (and its completions in various topologies)
of the natural map $\mu$ from the Haagerup tensor product $A\hg A$ (or $M(A)\hg M(A)$) 
into $\cb{A}$. This is a complementary problem to the question of when
$\mu$ is isometric, which has been solved  by Somerset
\cite{So} and Archbold, Timoney and Somerset \cite{AST}. Important special cases were studied
earlier by many others (see \cite{S},  \cite{AM} and the references there).

In Section 2 we obtain  a necessary and sufficient condition for a map $\phi$ on a $C^*$-algebra
$A$ to be pointwise approximable by elementary complete contractions. Besides the 
obvious requirement that $\phi(J)\subseteq J$ for each ideal $J$ of $A$, it is also necessary
that the induced map on $B\otimes A/J$ is contractive for every C$^*$-tensor norm on the
algebraic tensor product of $A/J$ with each C$^*$-algebra $B$. We prove
that these conditions are also sufficient, relate them to representations of $A$, derive 
some consequences and present a concrete example. It follows in particular that every
nuclear C$^*$-algebra has the {\em elementary completely contractive approximation property}
(the ECCAP, shortly) in the sense that all  complete contractions, that preserve ideals, can
be pointwise approximated by elementary ones. The author does not know if the converse is
true. 

It is not always easy to verify the conditions of our basic result (Theorem \ref{th21}
below),
since often we do not posses enough knowledge about all possible C$^*$-tensor norms concerned. 
Therefore we derive in Section 3 another necessary condition for approximation by elementary
complete contractions,  based on the possibility of lifting.  As an example, we apply this,
together with Ozawa's characterization of the local lifting property \cite{O}, to show that
the Calkin algebra does not have the ECCAP. 

It is customary to take the coefficients $a_i$ and $b_i$ of an {\em elementary
operator }
$\psi(x)=\sumin a_ixb_i\ \ (x\in A)$
to be in the multiplier algebra $M(A)$ of $A$, but,  when studying the pointwise 
approximation, we
can use an approximate unit of $A$ to modify the coefficients to be in $A$. The set of all elementary 
operators on $A$ (with coefficients in $A$)
is denoted by $\e{A}$ and the unit ball in $\e{A}$ (in the completely bounded norm) by $\eb{A}$. By $\icb{A}$ we shall
denote all completely bounded
maps that preserve all closed ideals in $A$ and by $\icbb{A}$ the unit ball of $\icb{A}$.
The closure of a set $S$ will be denoted by $\normc{S}$ for the norm topology, by
$\overline{S}^{p.n.}$ for the point norm topology and by $\overline{S}$ for the weak*
topology.
 
For  basic facts concerning completely bounded maps,  operator spaces and tensor norms we refer to any 
of the books  \cite{BLM}, \cite{ER}, \cite{P},  \cite{Pi}, \cite{W}. 

\section{Approximation by elementary operators and C$^*$-tensor norms}

\begin{theorem}\label{th21} The following three statements are equivalent for a complete
contraction $\phi$ on a C$^*$-algebra $A$:

(i) $\phi\in\peb{A}$.

(ii) For each representation $\pi:A\to\bh$ and all finite subsets $\{a_1,\ldots,a_n\}$
of $A$ and $\{a_1^{\prime},\ldots,a_n^{\prime}\}$ of $\com{\pi(A)}$ the following inequality
holds:
\begin{equation}\label{21} \|\sum_{i=1}^n\com{a}_i\pi(\phi(a_i))\|\leq\|\sum_{i=1}^n\com{a}_i\pi(a_i)\|.
\end{equation}

(iii) The map $\phi$ preserves all ideals $J\in\id{A}$ and for each 
C$^*$-algebra $B$ and C$^*$-tensor norm $\|\cdot\|$ on 
$B\otimes A/J$ the map $1\otimes\phi_J$ on $B\otimes A/J$ (where $\phi_J$  on
$A/J$ is defined by $\phi_J(a+J)=\phi(a)+J$) is a  contraction.
\end{theorem}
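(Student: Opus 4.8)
My plan is to prove the cyclic chain (i) $\Rightarrow$ (ii) $\Rightarrow$ (iii) $\Rightarrow$ (i), isolating the genuine work in the last implication. For (i) $\Rightarrow$ (ii) I would first verify \eqref{21} for a single elementary complete contraction $\psi(x)=\sum_k c_kxd_k$ and then pass to point-norm limits. The starting observation is that each $\com{a}_i\in\com{\pi(A)}$ commutes with every $\pi(c_k)$ and $\pi(d_k)$, so that
\[\sum_i\com{a}_i\,\pi(\psi(a_i))=\sum_k\pi(c_k)\Big(\sum_i\com{a}_i\,\pi(a_i)\Big)\pi(d_k).\]
Thus the left side is the image of $S:=\sum_i\com{a}_i\pi(a_i)$ under the elementary operator $\tilde\psi$ on $\bh$ with coefficients $\pi(c_k),\pi(d_k)$, and $\tilde\psi$ is a $\com{\pi(A)}$-bimodule map. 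The point to verify is that $\tilde\psi$ restricts to a contraction on the operator bimodule $\normc{\com{\pi(A)}\pi(A)}$; this is where $\cbnorm{\psi}\le 1$ enters, $\tilde\psi$ corresponding on the module level to $\mathrm{id}_{\com{\pi(A)}}\mtens\psi$ and the minimal (module) tensor norm being functorial for complete contractions. As \eqref{21} is a norm inequality whose two sides are norm-continuous in the $\phi(a_i)$, it survives point-norm limits, giving (i) $\Rightarrow$ (ii).

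Implication (ii) $\Rightarrow$ (iii) rests on the dictionary between C$^*$-tensor norms and commuting pairs of representations. Taking $n=1$, $a_1\in J$ and $\com{a}_1=1$ in a representation with kernel $J$ forces $\pi(\phi(a_1))=0$, so $\phi$ preserves every $J\in\id{A}$ and $\phi_J$ is well defined. For the rest, any C$^*$-tensor norm on $B\otimes A/J$ is the norm of a faithful representation of its completion, which splits into commuting representations $\rho$ of $B$ and $\sigma$ of $A/J$ with $\rho(B)\subseteq\com{\sigma(A/J)}$; writing $\pi=\sigma\circ q$ with $q\colon A\to A/J$ and $\com{a}_i=\rho(b_i)$, the norms of $\sum_i b_i\otimes(a_i+J)$ and of its image under $1\otimes\phi_J$ equal $\|\sum_i\com{a}_i\pi(a_i)\|$ and $\|\sum_i\com{a}_i\pi(\phi(a_i))\|$, so \eqref{21} is exactly the asserted contractivity. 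No seminorm difficulty arises here, since one starts from a genuine C$^*$-norm and a faithful (hence isometric) representation.

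The substance is (iii) $\Rightarrow$ (i), which I would prove by contraposition via Hahn--Banach. As $\eb{A}$ is convex and balanced and the point-norm-continuous functionals are the maps $\psi\mapsto\sum_{j=1}^n f_j(\psi(x_j))$ with $f_j\in A^*$, $x_j\in A$, failure of $\phi\in\peb{A}$ yields such data with $\mathrm{Re}\sum_j f_j(\phi(x_j))>s:=\sup_{\psi\in\eb{A}}\mathrm{Re}\sum_j f_j(\psi(x_j))$. Here $s$ is the support functional of the unit ball of $\e{A}$, and through the canonical map $\mu\colon A\hg A\to\cb{A}$ the dual Haagerup norm of the form $F(c\otimes d)=\sum_j f_j(c\,x_j\,d)$ is at most $s$; its representation theorem then provides states $f,g$ (equivalently a pair of representations with an interpolating contraction of norm $\le s$) factoring $F$. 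The plan is to feed this \emph{optimal} factorization into a \emph{genuine} product representation of a suitable $B\otimes A/J$, producing a C$^*$-tensor norm $\gamma$ and an element $u$ with $\gamma(u)\le s$, while the matrix coefficient picked out by the representing vectors gives $\gamma((1\otimes\phi_J)u)\ge|\sum_j f_j(\phi(x_j))|>s$, contradicting (iii).

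The hard part is precisely this construction, and it is exactly the interaction between the completely bounded norm and C$^*$-tensor norms that the paper is about. The naive attempt $B=\matn{\bc}$, $u=\sum_j e_{jj}\otimes(x_j+J)$ realizes $\sum_j f_j(\phi(x_j))$ as a coefficient of $(1\otimes\phi_J)u$, but then $\gamma(u)=\max_j\|\pi(x_j)\|$, which need not be $\le s$; so $u$ must instead be assembled from the optimal dual-Haagerup data in order that $\gamma(u)$ be governed by $s$. Equivalently, I am showing that \eqref{21} cuts out exactly the bipolar of $\eb{A}$, and the delicate point is that the commutant product seminorm attached to $\pi$ --- which is what \eqref{21} literally controls --- is a genuine C$^*$-tensor norm only when $\com{\pi(A)}$ and $\pi(A)''$ share no nontrivial central projection. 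I therefore expect to reduce to the factor case, e.g.\ by a central decomposition of $\pi$, in order to identify these seminorms with the C$^*$-tensor norms of (iii) and close the cycle.
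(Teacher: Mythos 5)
Your middle implication (ii) $\Rightarrow$ (iii) is essentially the paper's argument (modulo the unitization of $B$ needed to make sense of $\Psi(b\otimes 1)$ when $B$ is non-unital), but the other two implications have genuine gaps, both located exactly where the cb norm and the Haagerup norm of an elementary operator diverge. In (i) $\Rightarrow$ (ii), your key claim --- that $\tilde\psi$ restricts to a contraction on $\normc{\com{\pi(A)}\pi(A)}$ because it ``corresponds to $\mathrm{id}_{\com{\pi(A)}}\mtens\psi$'' --- is false: the norm that $\bh$ induces on this product space is \emph{not} the minimal tensor norm; it is precisely one of the exotic commutant seminorms the theorem is about. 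If that functoriality argument were valid, it would apply verbatim with an arbitrary ideal-preserving complete contraction $\phi$ in place of the elementary $\psi$ (since $\mathrm{id}\mtens\phi$ is contractive for \emph{every} complete contraction), so (ii) would hold for every $\phi\in\icbb{A}$; this contradicts Example \ref{ex26} of the paper, where the flip automorphism of $C^*_\lambda(\bff_2)$ violates (\ref{21}) already in the identity representation. The underlying problem is that $\cbnorm{\psi}\le1$ gives no control on the row/column norms of the particular coefficients $c_k,d_k$ you multiply with. The paper's remedy is the Chatterjee--Smith theorem: pass to the von Neumann algebra $R=\pi(A)''$, where the cb norm of the induced elementary operator equals its norm in the central Haagerup tensor product $R\otimes^h_Z R$, and re-represent it as $x\mapsto\sum_j c_j^*xd_j$ with columns $c,d$ of norm $<1$ and entries in $R$; these new coefficients still commute with each $\com{a}_i\in\com{\pi(A)}=R'$, and only then does your commutation identity finish the proof.

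For (iii) $\Rightarrow$ (i) you candidly leave the core construction undone, and the plan you sketch runs into the same obstruction: the scalar Hahn--Banach separation gives a functional whose dual Haagerup norm is $\le s$, but that norm only controls values on ${\rm E}_1^h(A)$, the image of the Haagerup unit ball, which is in general strictly smaller than $\eb{A}$; so the ``optimal factorization'' you extract is not tied to $s$ in the direction you need, and no amount of massaging it will produce the element $u$ with $\gamma(u)\le s$. The paper circumvents scalar functionals entirely: by \cite[1.1]{M4}, a point at positive distance from the convex set $S=\{(\psi(a_1),\dots,\psi(a_n)):\psi\in\eb{A}\}$ can be separated by a completely bounded $\tilde A$-\emph{bimodule} map $\Phi:\tilde A^n\to\bh$; the bimodule property forces $\com{a}_i:=\Phi(e_i)\in\com{\pi(\tilde A)}$ and $\Phi(x_1,\dots,x_n)=\sum_i\com{a}_i\pi(x_i)$, so the separation is literally a violation of (\ref{21}), which proves (ii) $\Rightarrow$ (i). The passage from such a violation to a genuine C$^*$-tensor norm (the paper's (iii) $\Rightarrow$ (ii)) is then done not by a central decomposition of $\pi$ (which would require separability and direct-integral theory) but by composing with an irreducible representation $\Theta$ of $D=C^*(\pi(A)\cup\com{\pi(A)})$ chosen to preserve the strict inequality (\ref{23}): irreducibility makes $\Theta(\pi(A))'$ a factor, so the multiplication map on $B\otimes\weakc{\Theta(\pi(A))}$ is injective by \cite[5.5.4]{KR}, turning the commutant seminorm into an honest C$^*$-norm on $B\otimes A/J$. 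Without these two devices --- the bimodule separation theorem and the irreducible-representation factor trick --- your cycle does not close.
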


\begin{proof} To prove that (i) implies (ii), given $\phi\in\e{A}$ with $\|\phi\|_{\rm cb}<1$,
and a representation $\pi:A\to\bh$, let $\phi_{\pi}$ be the elementary operator induced by
$\phi$ on the weak* closure $R$ of $\pi(A)$. Note that $\|\phi_{\pi}\|_{\rm cb}=
\|\phi_{\pi}|\pi(A)\|_{\rm cb}$
(a consequence of the Kaplansky density theorem) $\leq\|\phi\|_{\rm cb}$. Since $\|\phi_{\pi}\|_{\rm cb}$ is equal to the
norm of the tensor corresponding to $\phi_{\pi}$ in the central Haagerup tensor product
$R\otimes^h_ZR$ (see \cite{CS}), we have that $\phi_{\pi}$ is of the form
$\phi_{\pi}(x)=\sum_jc_j^*xd_j$ ($x\in R$), where $c_j,\ d_j\in R$ and the columns $c:=(c_1,c_2,\ldots)^T$ and 
$d:=(d_1,d_2,\ldots)^T$ have norms less than $1$. Then we have (with the notation as in (ii))
$$\begin{array}{lllll}\|\sum_i\com{a}_i\pi(\phi(a_i))\|&=&\|\sum_i\com{a}_i\phi_{\pi}(\pi(a_i))\|
&=&\|\sum_i\com{a}_i\sum_jc_j^*\pi(a_i)d_j\|\\
&=&\|\sum_jc_j^*(\sum_i\com{a}_i\pi(a_i))d_j\|&=&\|c^*(\sum_i\com{a}_i\pi(a_i))d\|\\
&\leq&\|c\|\|\sum_i\com{a}_i\pi(a_i)\|\|d\|
&\leq&\|\sum_i\com{a}_i\pi(a_i)\|.
\end{array}$$
This proves (\ref{21}) in the case of $\phi\in\e{A}$ with $\|\phi\|_{\rm cb}<1$, the general case
follows by an approximation.

To prove the implication $(ii)\Rightarrow(iii)$,
first note  that $\phi$ preserves each ideal $J$ of $A$ (take in 
(\ref{21}) $n=1$, $\com{a}_1=1$ and $\pi$ such that $\ker{\pi}=J$). Given a C$^*$-algebra
$B$ and a tensor norm $\|\cdot\|$ on $B\otimes A/J$ as in (iii), we first extend the tensor
norm to $\tilde{B}\otimes\tilde{A}/J$, where  $\tilde{B}$ is
the unitization of $B$ (if $B$ is not unital, and $\tilde{B}=B$ if $B$ is unital,
see \cite[IV.4.3]{T}). Let $\Psi$ be a faithful representation of the completion of
$\tilde{B}\otimes\tilde{A}/J$ on a Hilbert space $\h$, $q:\tilde{A}\to\tilde{A}/J$
the quotient map, and define $\pi:A\to\bh$ by
$$\pi(a)=\Psi(1\otimes q(a)).$$
Given $w=\sum_{i=1}^nb_i\otimes q(a_i)$ in $\tilde{B}\otimes \tilde{A}/J$, we put 
$\com{a}_i=\Psi(b_i\otimes 1)$. Then $\com{a}_i$ commutes with $\Psi(1\otimes A/J)=\pi(A)$.
Using (\ref{21}) and the fact that $\Psi$ is isometric we have that
$$\begin{array}{lll}\|(1\otimes\phi_J)(w)\|&=&
\|\Psi(\sum_{i=1}^nb_i\otimes q(\phi(a_i)))\|\\
&=&\|\sum_{i=1}^n\Psi(b_i\otimes1)\Psi(1\otimes q(\phi(a_i)))\|\\
 &=&\|\sum_{i=1}^n\com{a}_i\pi(\phi(a_i))\|\\
 &\leq&\|\sumin\com{a}_i\pi(a_i)\|\\
  &=&\|\sum_{i=1}^n\Psi(b_i\otimes1)\Psi(1\otimes q(a_i))\|\\
 &=& \|\Psi(\sum_{i=1}^nb_i\otimes q(a_i))\|\\
 &=&\|w\|.
\end{array} $$

Now we will prove the implication $(ii)\Rightarrow(i)$. If $\phi\notin\peb{A}$, then for 
some $n\in\bn$ and $a=(a_1,\ldots,a_n)\in A^n$ the element $y:=(\phi(a_1),\ldots,
\phi(a_n))$ is not contained in the norm closure of the subset 
$$S:=\{(\psi(a_1),\ldots,\psi(a_n)):\ \psi\in \eb{A}\}$$
of $\tilde{A}^n$ (with the max norm). By \cite[1.1]{M4}
there exist a representation $\pi:\tilde{A}\to\bh$ (for some Hilbert space $\h$, so that
$\bh$ is an $\tilde{A}$-bimodule via $\pi$) and a completely bounded 
$\tilde{A}$-bimodule map $\Phi:\tilde{A}^n\to\bh$ such that
\begin{equation}\label{22} \|\Phi(x)\|\leq1\ \mbox{for all}\ x\in S\ \ \mbox{and}\ \ 
\|\Phi(y)\|>1.
\end{equation}
The elements $\com{a}_i:=\Phi(e_i)$, where $e_i=(0,\ldots,1,\ldots,0)$, are in 
$\com{\pi(\tilde{A})}$ since for all $a\in\tilde{A}$
$$\com{a}_i\pi(a)=\Phi(e_i)\pi(a)=\Phi(e_ia)=\Phi(ae_i)=\pi(a)\Phi(e_i)=\pi(a)\com{a}_i.$$
Further, since $\Phi$ is an $A$-bimodule map, $\Phi(x_1,\ldots,x_n)=\sum\com{a}_i\pi(x_i)$
for all $x_i\in A$, and hence (\ref{22}) can be rewritten as 
$$\|\sum_{i=1}^n\com{a}_i\pi(\psi(a_i))\|\leq1\ \mbox{for all}\ \psi\in\eb{A}\ \ 
\mbox{and}\ \ \|\sum_{i=1}^n\com{a}_i\pi(\phi(a_i))\|>1,$$
where we have used the definitions of $S$ and $y$. In particular (taking $\psi$ of the form $\psi(x)=e_kxe_k$, where $(e_k)$ is an approximate
unit in $A$)
\begin{equation}\label{23}\|\sum_{i=1}^n\com{a}_i\pi(\phi(a_i))\|>\|\sum_{i=1}^n
\com{a}_i\pi(a_i)\|.
\end{equation}
This contradicts the assumption  (ii).

It remains to prove that $(iii)\Rightarrow(ii)$. If (ii) does not hold, then there exist
a (non-degenerate) representation $\pi$ of $A$ on a Hilbert space $\h$ and elements $a_j\in A$, $\com{a}_j\in\com{\pi(A)}$ ($j=1,\ldots,n$)
such that (\ref{23}) holds. Set $C=\pi(A)$ and $D=C^*(C\cup\com{C})$, the C$^*$-algebra generated by
$C\cup\com{C}$ inside $\bh$. By (\ref{23}), we may choose an irreducible representation $\Theta:
D\to{\rm B}({\mathcal K})$ such that 
\begin{equation}\label{24} \|\Theta(\sum_{i=1}^n\com{a}_i\pi(\phi(a_i))\|>
\|\Theta(\sum_{i=1}^n\com{a}_i\pi(a_i)\|.
\end{equation}
Since $\Theta(C)\subseteq\Theta(\com{C})^{\prime}$, it follows that 
$$\weakc{\Theta(C)}\cap\com{\Theta(C)}\subseteq\Theta(\com{C})^{\prime}\cap\Theta(C)^{\prime}
=\Theta(D)^{\prime}=\bc1,$$
hence $B:=\Theta(C)^{\prime}$ is a factor. Therefore the linear map $x\otimes y
\stackrel{\mu}{\mapsto} xy$ from
the algebraic tensor product $B\otimes\weakc{\Theta(C)}$ onto the subalgebra of ${\rm B}({\mathcal K})$
generated by $B\cup\weakc{\Theta(C)}$ is an isomorphism \cite[5.5.4]{KR}. Let $J=\ker\pi$. 
Since $A/J\cong\pi(A)=C$, we have  injections
$$B\otimes A/J\to B\otimes C\to B\otimes\weakc{\Theta(C)}\stackrel{\mu}{\to}{\rm B}({\mathcal K}),\ \ b\otimes\dot{a}
\mapsto b\Theta(\pi(a)),$$
which can be used to define a C$^*$-norm on $B\otimes A/J$. Put $b_i=\Theta(\com{a}_i)$.
Since by assumption of (iii) $\phi$ preserves ideals, $\phi$ induces a map $\phi_J$ on
$A/J$ (such that $\phi_J(a+J)=\phi(a)+J$ for all $a\in A$). Then (\ref{24}) says that
\begin{equation}\label{25}\|\sum_{i=1}^nb_i\otimes\phi_J(a_i+J)\|>\|\sum_{i=1}^n
b_i\otimes(a_i+J)\|.
\end{equation}
From $\com{a}_i\in\com{\pi(A)}=\com{C}$ we have  $b_i\in\Theta(\com{C})
\subseteq\com{\Theta(C)}=B$, and by (\ref{25}) the map $1\otimes\phi_J$ on $B\otimes
A/J$ is not contractive. This contradicts the assumption (iii).
\end{proof}

Now we derive some consequences of Theorem \ref{th21}.

\begin{corollary}\label{co22} If the range of  $\phi\in\icbb{A}$ is contained in
a nuclear C$^*$-subalgebra $C$ of $A$, then $\phi\in\peb{A}$. In particular, if $A$ is 
nuclear,  $\icbb{A}=\peb{A}$.
\end{corollary}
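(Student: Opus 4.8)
The plan is to verify condition (iii) of Theorem \ref{th21} for the map $\phi$, using nuclearity of the target algebra $C$. Since I already have the equivalence $(iii)\Leftrightarrow(i)$, it suffices to check that $\phi$ preserves ideals and that $1\otimes\phi_J$ is contractive for every C$^*$-algebra $B$, every ideal $J\in\id{A}$, and every C$^*$-tensor norm on $B\otimes A/J$. The ideal-preservation is the hypothesis $\phi\in\icbb{A}$, so the real content is the contractivity of $1\otimes\phi_J$ on $B\otimes A/J$ for an arbitrary C$^*$-norm.

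The key structural input is nuclearity of $C$. First I would observe that $\phi$ factors as $A\stackrel{\phi}{\to}C\hookrightarrow A$, where $\phi:A\to C$ is a complete contraction into $C$, so that $\phi_J:A/J\to A/J$ factors through the image of $C$ in $A/J$. The plan is then to exploit the defining feature of nuclearity: for a nuclear C$^*$-algebra there is a unique C$^*$-tensor norm, so the algebraic tensor product $B\otimes C$ admits only one C$^*$-completion. More precisely, I would pass to the quotient $A/J$ and show that the relevant tensor norms are controlled. Let $q:A\to A/J$ be the quotient map; then $q(C)$ is a nuclear C$^*$-subalgebra of $A/J$ (nuclearity passes to quotients and the quotient of a nuclear algebra by its intersection with $J$ remains nuclear), and $\phi_J$ maps $A/J$ into $q(C)$.

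The main technical step, and the one I expect to be the principal obstacle, is to transfer a given C$^*$-tensor norm $\|\cdot\|$ on $B\otimes A/J$ to a comparison with the (unique) norm on $B\otimes q(C)$ so that the complete contractivity of $\phi_J$ into $q(C)$ forces $1\otimes\phi_J$ to be contractive. The idea is this: the inclusion $q(C)\hookrightarrow A/J$ and the given norm on $B\otimes A/J$ together induce, by restriction, a C$^*$-norm on the subalgebra $B\otimes q(C)$; by nuclearity of $q(C)$ this restricted norm must coincide with the minimal (equivalently maximal) tensor norm, hence agrees with the spatial norm coming from any representation. On the other hand, a complete contraction into a nuclear algebra tensors to a contraction for the minimal norm: since $\phi_J:A/J\to q(C)$ is completely contractive and $q(C)$ is nuclear, the map $1\otimes\phi_J:B\otimes A/J\to B\otimes q(C)$ is contractive for the minimal norm on the target (this uses that $1_B\otimes(\cdot)$ respects complete contractivity for the minimal norm, together with nuclearity to identify norms). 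Composing with the isometric inclusion $B\otimes q(C)\hookrightarrow B\otimes A/J$ completed in the given norm yields the desired contractivity of $1\otimes\phi_J$ on $B\otimes A/J$.

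Thus the heart of the argument is a nuclearity-driven norm comparison: any C$^*$-tensor norm on $B\otimes A/J$ restricts to the unique C$^*$-norm on $B\otimes q(C)$, and complete contractivity of $\phi_J$ into the nuclear algebra $q(C)$ guarantees contractivity of $1\otimes\phi_J$. Once this is in place, condition (iii) holds and Theorem \ref{th21} gives $\phi\in\peb{A}$. For the final assertion, when $A$ itself is nuclear every $\phi\in\icbb{A}$ has range trivially contained in the nuclear subalgebra $A$, so the first part yields $\icbb{A}\subseteq\peb{A}$; the reverse inclusion $\peb{A}\subseteq\icbb{A}$ is immediate since a point-norm limit of elementary complete contractions (each of which is a complete contraction preserving ideals) is again a complete contraction preserving ideals, giving $\icbb{A}=\peb{A}$.
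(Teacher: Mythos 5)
Your proposal is correct and follows essentially the same route as the paper's proof: both verify condition (iii) of Theorem \ref{th21} by observing that $\phi_J$ maps $A/J$ into the nuclear quotient $q(C)\cong C/(C\cap J)$, that any C$^*$-tensor norm on $B\otimes A/J$ restricts on $B\otimes q(C)$ to the unique (minimal) C$^*$-norm, and then invoking injectivity of the minimal tensor norm together with functoriality of $\mathord{\cdot}\mtens\mathord{\cdot}$ under complete contractions. The only difference is presentational: you argue directly, while the paper runs the same norm comparison as a proof by contradiction starting from inequality (\ref{25}).
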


\begin{proof} Suppose that $\phi\in\icbb{A}\setminus\peb{A}$. Then by Theorem
\ref{th21} there exist an ideal $J$ in $A$, a C$^*$-algebra $B$, a C$^*$-norm
on $B\otimes A/J$ and elements $a_1,\ldots,a_n$ in $A$ and $b_1,\ldots,b_n$ in $B$
such that (\ref{25}) holds. Since quotients of nuclear C$^*$-algebras are nuclear
(see \cite[9.4.4]{BO} or \cite[XV. 3.4]{T}), $C/(C\cap J)$ is nuclear, hence the C$^*$-tensor
norm on $B\otimes C/(C\cap J)$ that comes from the injection $B\otimes C/(C\cap J)\to
B\otimes A/J$ (where $B\otimes A/J$ carries the norm for which (\ref{25}) holds) must
coincide with the minimal C$^*$-tensor norm on $B\otimes C/(C\cap J)$. Therefore
(\ref{25}) implies that 
$$\|\sumin b_i\otimes(\phi(a_i)+(C\cap J))\|_{B\stackrel{\rm min}{\otimes}C/(C\cap J)}
>\|\sumin b_i\otimes(a_i+J)\|_{B\stackrel{\rm min}{\otimes}A/J}.$$
However, the inclusion $C/(C\cap J)\to A/J$ induces an (isometric) $*$-monomorphism
$B\mtens C/(C\cap J)\to B\mtens A/J$, hence the last inequality can be rewritten as
$$\|\sumin b_i\otimes\phi_J(a_i+J)\|_{B\mtens A/J} > \|\sumin b_i\otimes(a_i+J)\|_{B
\mtens A/J}.$$
But this contradicts the well-known fact that $1\mtens\phi_J$ is a complete contraction
since $\phi_J$ is.
\end{proof}

Recall that a (completely bounded) map $\phi:A\to B$ between C$^*$-algebras is called 
{\em decomposable} if it
can be written as a linear combination of completely positive maps. Equivalently,
$\phi$ is decomposable if and only if there exist completely positive maps $\psi_j:A
\to B$ ($j=1,2$) such that the map
\begin{equation}\label{d}A\to {\rm M}_2(B),\ \ a\mapsto\left[\begin{array}{ll}
\psi_1(a)&\phi(a)\\
\phi(a^*)^*&\psi_2(a)
\end{array}\right]
\end{equation}
is completely positive. The infimum of all $\max\{\cbnorm{\psi_1},\cbnorm{\psi_2}\}$,
where $\psi_1$ and $\psi_2$ range over all completely positive maps from $A$ into
$B$ such that  (\ref{d}) is completely positive, is a norm, denoted by
$\dnorm{\phi}$. For more, see \cite{Ha} or \cite{ER}, \cite{Pi}. 

\begin{corollary}\label{co23} For each ideal $J$ in $A$ denote by $\kappa_J$ the 
canonical embedding of $A/J$ into its second dual $\bdu{(A/J)}=\bdu{A}/\bdu{J}$
(and write $\kappa=\kappa_0$ if $J=0$).

(i) If $\phi\in\peb{A}$, then  $\kappa_J\phi$ is decomposable with
$\dnorm{\kappa_J\phi_J}\leq1$ for all $J\in\id{A}$.

(ii) Conversely, if $\phi\in\icbb{A}$ is the pointwise limit of a net of finite rank
maps $\phi_k\in\icbb{A}$ such that $\dnorm{\kappa\phi_k}\leq1$, 
then $\phi\in\peb{A}$. In particular, if $A$ is simple, all finite rank maps $\phi$ with
$\dnorm{\phi}\leq1$ are in $\peb{A}$.
\end{corollary}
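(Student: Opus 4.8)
The plan is to route both directions through Theorem \ref{th21}, turning the approximation problem into the decomposability estimate and back. For (i), suppose $\phi\in\peb{A}$ and pick a net of elementary complete contractions $\psi_k\to\phi$ in the point norm topology; then $(\psi_k)_J\to\phi_J$ pointwise in norm on $A/J$, and composing with the isometric embedding $\kappa_J$ gives $\kappa_J(\psi_k)_J\to\kappa_J\phi_J$ in the point weak* topology. It therefore suffices to bound $\dnorm{\kappa_J\psi_J}$ for a single elementary complete contraction $\psi$ and then pass to the limit. For the bound I would take $\pi$ to be the universal representation of $A/J$ in the argument of Theorem \ref{th21}, so that the induced map on the bidual has the form $\kappa_J\psi_J(x)=\sum_jc_j^*xd_j$ with $c_j,d_j\in\bdu{(A/J)}$ and columns $c,d$ of norm $\le1$; the completely positive matrix map $x\mapsto\bigl[\begin{smallmatrix}\sum_jc_j^*xc_j&\sum_jc_j^*xd_j\\\sum_jd_j^*xc_j&\sum_jd_j^*xd_j\end{smallmatrix}\bigr]$ has diagonal entries of cb norm $\|c\|^2\le1$ and $\|d\|^2\le1$, witnessing $\dnorm{\kappa_J\psi_J}\le1$. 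To pass to $\phi$ I would use that the completely positive contractions into the von Neumann algebra $\bdu{(A/J)}$ form a point weak* compact set: extracting a convergent subnet of the diagonal pairs and using that complete positivity is preserved under point weak* limits (while the off diagonal converges to $\kappa_J\phi_J$) produces a completely positive $2\times2$ matrix establishing $\dnorm{\kappa_J\phi_J}\le1$.

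For (ii), since $\peb{A}$ is point norm closed it is enough, given the hypothesis, to treat a single finite rank $\phi\in\icbb{A}$ with $\dnorm{\kappa\phi}\le1$ and show $\phi\in\peb{A}$; I will verify condition (ii) of Theorem \ref{th21}. Fix $\pi\colon A\to\bh$, set $J=\ker\pi$ and $N=\pi(A)''$, and take $a_i\in A$, $a_i'\in N'$. First I descend to the quotient: composing the bidual decomposition of $\kappa\phi$ with the quotient map and compressing the diagonal completely positive maps by an approximate unit of $J$ (so that they annihilate $J$) gives $\dnorm{\kappa_J\phi_J}\le1$; pushing this forward by the normal extension $\tilde\pi\colon\bdu{(A/J)}\to N$ of the faithful representation $\pi_J$ exhibits $\rho:=\pi\phi$ as a finite rank decomposable map with $\dnorm{\rho}\le1$ whose completely positive ``corners'' $\psi_1,\psi_2\colon A/J\to N$ have cb norm $\le1$.

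The core step factors the finite rank completely positive matrix map $a\mapsto\bigl[\begin{smallmatrix}\psi_1(a)&\rho(a)\\\rho(a^*)^*&\psi_2(a)\end{smallmatrix}\bigr]$ through a matrix algebra as $\Theta\circ\nu$, with $\nu\colon A/J\to{\rm M}_m$ a unital completely positive contraction and $\Theta\colon{\rm M}_m\to{\rm M}_2(N)$ completely positive. Positivity of $[\Theta(e_{kl})]\in{\rm M}_{2m}(N)$ yields $\rho(a)=\sum_{k,l}\nu(a)_{kl}\sum_pc_k^{(p)*}d_l^{(p)}$ with columns $c,d$ over $N$ of norm $\le1$; pulling each $a_i'\in N'$ past the coefficients $c_k^{(p)*}\in N$ gives $\|\sum_ia_i'\rho(a_i)\|\le\|\sum_i\nu(a_i)\otimes a_i'\|_{{\rm M}_m\mtens D}$, where $D=C^*(\{a_i'\})$. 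Finally, since $\pi_J$ is faithful and ${\rm M}_m$ is injective, $\nu$ factors as $\tilde\nu\circ\pi_J$ for a completely positive contraction $\tilde\nu\colon N\to{\rm M}_m$, and nuclearity of ${\rm M}_m$ makes $\tilde\nu\otimes\mathrm{id}_D$ contractive for the minimal norm; as the minimal norm is dominated by the C$^*$-norm coming from the product representation $x\otimes u\mapsto xu$ of $N\otimes D$ on $\h$, I obtain $\|\sum_i\nu(a_i)\otimes a_i'\|_{{\rm M}_m\mtens D}\le\|\sum_ia_i'\pi(a_i)\|$. Chaining these inequalities is exactly (\ref{21}), so $\phi\in\peb{A}$; the simple case follows because then the only ideals are $0$ and $A$, ideal preservation is automatic, and the approximating net is $\phi$ itself.

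The step I expect to be the main obstacle is the matrix algebra factorization $\Theta\circ\nu$ in which the diagonal norm control $\dnorm{}\le1$ is preserved, since this forces the decomposition of the finite rank map $\kappa\phi$ to be taken of finite rank. I would arrange this either via the standard factorization of finite rank decomposable maps through matrix algebras, or by conceding an $\varepsilon$ in the diagonal bound (harmless after $\varepsilon\to0$) through a finite rank point weak* approximation of the completely positive corners. Two further points that need care are the descent $\dnorm{\kappa\phi}\le1\Rightarrow\dnorm{\kappa_J\phi_J}\le1$ by compression with an approximate unit of $J$, and the use of injectivity of ${\rm M}_m$ to produce $\tilde\nu$ with $\tilde\nu\circ\pi_J=\nu$; notably, the finite rank hypothesis (hence nuclearity of ${\rm M}_m$) is what lets the argument replace the intractable general commutant estimate by the minimal tensor norm, which is dominated by every C$^*$-norm.
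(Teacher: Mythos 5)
Your part (i) is correct, but it travels a different road than the paper's. The paper gets (i) in one step: by Theorem \ref{th21}, condition (iii) applied with the maximal norm shows that $1\otimes\phi_J$ is contractive on $B\Mtens A/J$ for every $B$, and Kirchberg's theorem \cite[14.1]{Pi} converts this into $\dnorm{\kappa_J\phi_J}\le 1$. You instead rerun the Chatterjee--Smith representation (as in the proof of (i)$\Rightarrow$(ii) of Theorem \ref{th21}) in the universal representation of $A/J$ to decompose each approximating elementary complete contraction, and then take a point weak* limit of the resulting $2\times2$ completely positive witnesses. This is sound --- the diagonals are uniformly bounded, complete positivity passes to point weak* limits, and the adjoint is weak* continuous --- and it avoids Kirchberg's theorem, at the cost of length.

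Part (ii) has a fatal flaw, and it sits not at the step you flagged but at the step you treated as routine: the final inequality $\|\sum_i\nu(a_i)\otimes a_i'\|_{{\rm M}_m\mtens D}\le\|\sum_i a_i'\pi(a_i)\|$. You justify it by saying that the minimal norm is dominated by ``the C$^*$-norm coming from the product representation $x\otimes u\mapsto xu$ of $N\otimes D$''. But the pullback of the operator norm under the product map is in general only a C$^*$-\emph{seminorm} on the algebraic tensor product $N\otimes D$: it is a norm when one of the two commuting algebras is a factor \cite[5.5.4]{KR} --- which is exactly why the paper, in the proof of (iii)$\Rightarrow$(ii) of Theorem \ref{th21}, passes to an irreducible representation $\Theta$ so that $\com{\Theta(C)}$ becomes a factor --- and a seminorm does not dominate the minimal norm. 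Concretely, let $A=C[0,1]$ and let $\pi$ be the multiplication representation on $L^2[0,1]$, so that $J=\ker\pi=0$ and $N=\com{N}=L^\infty[0,1]$. Take $f(t)=t$, $a_1=f$, $a_1'=1$, $a_2=1$, $a_2'=-\pi(f)$, so that $\sum_i a_i'\pi(a_i)=0$. Now take $\phi=\omega(\cdot)1$ for a state $\omega$ on $A$: it is rank one, completely positive, $\dnorm{\kappa\phi}=1$, and it preserves the ideal $\ker\pi=0$, so it satisfies every hypothesis your fixed-$\pi$ argument actually uses; its matrix map factors through ${\rm M}_1=\bc$ with $\nu=\omega$, and $\tilde\nu$ is a state on $N$ extending $\omega\circ\pi^{-1}$. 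Your inequality would then read $\|\omega(f)1-f\|_\infty\le 0$, which is false --- and indeed (\ref{21}) itself fails for this $\phi$ and this $\pi$. So the problem is structural, not cosmetic: no argument that fixes a single representation $\pi$ and uses only finite rank together with $\dnorm{\kappa_J\phi_J}\le1$ for $J=\ker\pi$ can establish (\ref{21}), since those hypotheses hold in the example just given while (\ref{21}) fails; preservation of the \emph{other} ideals of $A$ must enter somewhere.

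This is why the paper never argues at a fixed representation. Its proof of (ii) negates membership in $\peb{A}$, obtains from Theorem \ref{th21} the strict inequality (\ref{25}) for some ideal $J$, some $B$ and some C$^*$-norm, weakens it to the maximal norm on the left and the minimal norm on the right, transfers it to $\phi_k$ for $k$ large, and then derives a contradiction from two imported facts: Kirchberg's theorem (turning $\dnorm{\kappa\phi_k}\le1$ into contractivity of $1\otimes\phi_k$ on $B\Mtens A$, which descends to $B\Mtens A/J$ because $\phi_k$ preserves $J$), and the finite-rank theorem of \cite{JL} (see \cite[11.10]{Pi}), which makes $1\otimes\phi_{kJ}$ contractive from $B\mtens A/J$ into $B\Mtens A/J$. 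Note that the ``main obstacle'' you flagged --- factoring through a matrix algebra while keeping decomposable-norm control --- is essentially this Junge--Le Merdy ingredient, so your plan amounts to re-proving the one result the paper simply quotes. Finally, your descent step is also imprecise as stated: compressing the diagonal completely positive maps by an approximate unit of $J$ does not make them annihilate $J$ (consider $\psi_i=\omega(\cdot)1$); the statement you want is true, but needs the central projection $z\in\bdu{A}$ with $\bdu{J}=z\bdu{A}$ and normal extensions, or can be bypassed entirely, as in the paper, by quotienting at the level of $B\Mtens A$.
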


\begin{proof} (i) By Theorem \ref{th21}, for each $J\in\id{A}$ and each C$^*$-algebra
$B$ the map $1\otimes\phi_J$ on $B\Mtens A/J$ is (completely) contractive. Hence it follows
by Kirchberg's Theorem \cite[14.1]{Pi} that $\kappa_J\phi_J$ is decomposable with
$\dnorm{\kappa_J\phi_J}\leq1$. 

(ii) If $\phi\notin\peb{A}$, then by Theorem \ref{21} there exist $J\in\id{A}$, a
C$^*$-algebra $B$ and elements $a_1,\ldots,a_n$ in $A$ and $b_1,\ldots,b_n$ in $B$
satisfying (\ref{25}), which implies that
$$\|\sumin b_i\otimes\phi_J(a_i+J)\|_{B\Mtens A/J} > \|\sumin b_i\otimes(a_i+J)\|_{B
\mtens A/J}.$$
Thus, for sufficiently large $k$ we have 
\begin{equation}\label{26}\|\sumin b_i\otimes\phi_{kJ}(a_i+J)\|_{B\Mtens A/J} >
\|\sumin b_i\otimes(a_i+J)\|_{B\mtens A/J},
\end{equation}
since $\phi_{kJ}(a_i+J)=\phi_k(a_i)+J$ converge to $\phi(a_i)+J=\phi_{J}(a_i+J)$.
However, since $\dnorm{\kappa\phi_k}\leq1$, $1\otimes\phi_k$ is a contraction on 
$B\Mtens A$ by \cite[14.1]{Pi} and, since $\phi_k$ preserves ideals, this induces
a contraction $1\otimes\phi_{kJ}$ on $(B\Mtens A)/(B\Mtens J)\cong B\Mtens(A/J)$. 
Since $\phi_{kJ}$ is of finite rank, this implies that $1\otimes
\phi_{kJ}$ must be contractive as a map from $B\mtens A/J$ to $B\Mtens A/J$ (see 
\cite[11.10]{Pi} or \cite{JL}),  contradicting  (\ref{26}).
\end{proof}

Recall that a C$^*$-algebra $A$ has the weak expectation property (WEP) if the
inclusion $\kappa$ of $A$ into its second dual $\bdu{A}$ has a completely positive and completely
contractive factorization  through some $\bh$ \cite[15.1]{Pi}. An operator space
$X\subseteq\bh$ is {\em $1$-exact} if and only if for every C$^*$-algebra $C$ 
the map $i\otimes 1_C$, where $i:X\to \bh$ is the inclusion, extends to a contraction
$X\mtens C\to\bh\Mtens C$  \cite[17.1]{Pi}.
 
\begin{corollary}\label{co24} Suppose that $A$ is simple and has the WEP and that
$X$ is a $1$-exact operator subspace of $A$. Then every map $\phi\in\cb{A}$ such that
$\kappa\phi:A\to\bdu{A}$ 
is decomposable with $\dnorm{\kappa\phi}\leq1$ can be approximated pointwise on $X$ by
elementary complete contractions from $\e{A}$.
\end{corollary}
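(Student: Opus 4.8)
The plan is to argue by contradiction along the lines of Theorem \ref{th21}, but to split the work so that each of the three hypotheses controls a separate side of a single tensor-norm inequality. First note that $\dnorm{\kappa\phi}\le1$ forces $\cbnorm{\phi}\le1$, so $\phi$ is already a complete contraction, and since $A$ is simple the only ideal in play is $J=0$. I would fix the universal representation, so that $A\subseteq\bdu A\subseteq\bh$ with $\bdu A=A''$ and $\kappa$ the inclusion; this lets all three hypotheses be realized on one and the same $\bh$. Now suppose the conclusion fails: some tuple $(x_1,\dots,x_n)\in X^n$ has $(\phi(x_1),\dots,\phi(x_n))$ outside the norm closure of $\{(\psi(x_1),\dots,\psi(x_n)):\psi\in\eb{A}\}$. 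Running the separation argument of \cite[1.1]{M4} exactly as in the proofs of $(ii)\Rightarrow(i)$ and $(iii)\Rightarrow(ii)$ of Theorem \ref{th21} (with the $a_i$ there replaced by our $x_i\in X$ and with $J=0$), I would produce a factor $B$, a C$^*$-norm $\|\cdot\|_\sigma$ on $B\otimes A$, and elements $b_i\in B$ with
\[\|\sum_i b_i\otimes\phi(x_i)\|_\sigma>\|\sum_i b_i\otimes x_i\|_\sigma.\]
The entire task then reduces to contradicting this one inequality.

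Since $\|\cdot\|_\sigma$ is squeezed between $\mtens$ and $\Mtens$, I would bridge these two norms by transferring the computation into $\bh$. On the \emph{image} side I would invoke the WEP: there is a completely contractive completely positive $E:\bh\to\bdu A$ with $E|_A=\kappa$, and, combined with the embedding $A\Mtens B\hookrightarrow\bdu A\Mtens B$ (isometric for any $A$, since a representation of $A$ commuting with $B$ extends normally to $\bdu A$), the contraction $E\otimes 1$ shows that $A\Mtens B\hookrightarrow\bh\Mtens B$ is isometric; hence $\|\sum_i b_i\otimes\phi(x_i)\|_{B\Mtens A}=\|\sum_i b_i\otimes\phi(x_i)\|_{B\Mtens\bh}$. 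On the \emph{source} side I would invoke $1$-exactness of $X$: applying the defining contraction $X\mtens B\to\bh\Mtens B$ to $\sum_i x_i\otimes b_i$ and using $X\mtens B\hookrightarrow A\mtens B$ isometrically gives $\|\sum_i b_i\otimes x_i\|_{B\Mtens\bh}\le\|\sum_i b_i\otimes x_i\|_{B\mtens A}$.

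To link the two $\bh$-sides I would manufacture a single decomposable contraction on $\bh$ from $\phi$. Composing $\kappa\phi:A\to\bdu A$ (which has $\dnorm{\kappa\phi}\le1$) with the inclusion $\bdu A\hookrightarrow\bh$ yields $\phi_0:A\to\bh$, $a\mapsto\phi(a)$, still with $\dnorm{\phi_0}\le1$ (the inclusion is a $*$-homomorphism, so it cannot increase the decomposable norm); and since $\bh$ is injective, $\phi_0$ extends to $\Phi\in\cb{\bh}$ with $\dnorm{\Phi}\le1$ \cite{Ha}. Kirchberg's theorem \cite[14.1]{Pi} then makes $1\otimes\Phi$ a contraction on $B\Mtens\bh$, so $\|\sum_i b_i\otimes\phi(x_i)\|_{B\Mtens\bh}\le\|\sum_i b_i\otimes x_i\|_{B\Mtens\bh}$. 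Chaining the four estimates, together with $\|\cdot\|_\sigma\le\Mtens$ on the left and $\|\cdot\|_\sigma\ge\mtens$ on the right, produces $\|\sum_i b_i\otimes\phi(x_i)\|_\sigma\le\|\sum_i b_i\otimes x_i\|_\sigma$, contradicting the displayed strict inequality.

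I expect the delicate point to be the clean allocation of the hypotheses rather than any one computation: the WEP is precisely what upgrades $A\subseteq\bh$ to a max-isometric inclusion on the image side, while $1$-exactness of $X$ is precisely what lets the source elements $x_i$ descend from the $\bh$-maximal norm back to the minimal norm of $A$, with the injectivity of $\bh$ and $\dnorm{\kappa\phi}\le1$ supplying the contraction $\Phi$ that joins them. Additional care is needed to keep all three hypotheses on the single Hilbert space of the universal representation (where $\bdu A$ sits inside $\bh$), and to justify the decomposable extension into $\bh$ in the non-unital case, which I would handle by extending the associated $2\times2$ completely positive map by Arveson's theorem and reading off its off-diagonal corner.
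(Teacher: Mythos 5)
Your argument is correct, and while its skeleton is the paper's (contradiction, the separation machinery of Theorem~\ref{th21} with the $x_i\in X$ and $J=0$ yielding a C$^*$-norm $\sigma$ on $B\otimes A$ with $\|\sum b_i\otimes\phi(x_i)\|_\sigma>\|\sum b_i\otimes x_i\|_\sigma$, then squeezing $\sigma$ between $\mtens$ and $\Mtens$), the way you kill the inequality allocates the hypotheses differently from the published proof. The paper stays inside $B\Mtens A$: it uses the WEP via Lance's theorem ($C^*(\bff)\Mtens A=C^*(\bff)\mtens A$ for a free group $\bff$ with $C^*(\bff)$ surjecting onto $B$, \cite[15.5]{Pi}), combined with the lifting/quotient argument from the \emph{proof} of \cite[17.1]{Pi}, to conclude that $1\otimes\iota:B\mtens X\to B\Mtens A$ is contractive, and then applies Kirchberg's theorem to $\kappa\phi$ on $B\Mtens A$ itself. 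You instead transport everything into the $\bh$ of the universal representation: the WEP, in its weak-expectation form $E:\bh\to\bdu{A}$ with $E|_A=\kappa$, makes $B\Mtens A\to B\Mtens\bh$ isometric (your verification via the isometry $B\Mtens A\hookrightarrow B\Mtens\bdu{A}$ and the max-contractivity of $1\otimes E$ is sound); $1$-exactness enters through the \emph{statement} of \cite[17.1]{Pi} for the embedding $X\subseteq\bh$, which is legitimate because $1$-exactness is intrinsic, so the characterization holds for every completely isometric embedding; and Kirchberg is applied not to $\phi$ but to an extension $\Phi\in\cb{\bh}$ with $\dnorm{\Phi}\leq1$, manufactured by Arveson--Wittstock extension together with Haagerup's theorem that decomposable and cb norms coincide for maps into the injective algebra $\bh$ \cite{Ha}. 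The trade-off: your route avoids free-group C$^*$-algebras and quotient/lifting arguments entirely and uses \cite[17.1]{Pi} as a black box rather than its proof, at the cost of the extra decomposable-extension step (which your closing remark handles adequately, including the non-unital case); the paper's route never needs to extend $\phi$ beyond $A$, but leans on a proof-level citation for the exactness step. Both are complete proofs of the corollary.
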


\begin{proof} If $\phi|X$ can not be approximated, then for some $n\in\bn$ there exists
$x=(x_1,\ldots,x_n)\in X^n$ such that $y:=(\phi(x_1),\ldots,\phi(x_n))$ is not in the norm
closure of the set $S:=\{(\psi(x_1),\ldots,\psi(x_n)):\ \psi\in\eb{A}\}$. As in the proof
of the implications $(iii)\Rightarrow(ii)\Rightarrow(i)$ in Theorem \ref{th21} it follows that there exists 
a C$^*$-algebra $B$, a C$^*$-norm on $B\otimes A$ and elements $b_1,\ldots,b_n$ in $B$ such
that (\ref{25}) holds (with $J=0$, since $A$ is simple). This implies that 
\begin{equation}\label{27}\|\sumin b_i\otimes\phi(x_i)\|_{B\Mtens A} > \|\sumin b_i\otimes
x_i\|_{B\mtens A}=\|\sumin b_i\otimes x_i\|_{B\mtens X}.
\end{equation}
Let $\bff$ be a free group such that there exists a
$*$-epimorphism from the group C$^*$-algebra $C^*(\bff)$ onto $B$ and denote by
$\iota$ the inclusion of $X$ into $A$. Since $A$ has the WEP, $C^*(\bff)\Mtens A=
C^*(\bff)\mtens A$ \cite[15.5]{Pi}. Since $X$ is $1$-exact, this implies that 
$1\otimes\iota:B\mtens X\to B\Mtens A$ is a (well-defined) contraction (see the proof of the implication
$(i)\Rightarrow (ii)$ in Theorem 17.1 of \cite{Pi} if necessary), hence (\ref{27}) can
be rewritten as 
$$\|\sumin b_i\otimes\phi(x_i)\|_{B\Mtens A} > \|\sumin b_i\otimes x_i\|_{B\Mtens A}.$$
But $1\otimes\phi$ is contractive on $B\Mtens A$ since
$\dnorm{\kappa\phi}\leq1$ \cite[14.1]{Pi}, a contradiction.
\end{proof}

\smallskip
{\em Problem.} Characterize  C$^*$-algebras with $\icbb{A}=\peb{A}$. Are they necessarily
nuclear ?

\smallskip
We have a partial answer to the above question. An operator space 
(in particular a C$^*$-algebra) $X$ is called {\em locally reflexive} if
for each finite dimensional operator space $F$ the equality $\bdu{(X\mtens F)}=
\bdu{X}\mtens F$ holds (completely) isometrically \cite{ER}, \cite{Pi}.

\begin{proposition}\label{pr25} Suppose that $A$ is simple and locally reflexive. Then
the equality $\cbb{A}=\peb{A}$ implies that $A$ is nuclear.
\end{proposition}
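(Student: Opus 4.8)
The plan is to reduce the nuclearity of $A$ to the injectivity of its bidual $M:=\bdu{A}$, using the standard equivalence that a C$^*$-algebra is nuclear precisely when its second dual is injective (semidiscrete), see \cite{BO} or \cite{Pi}. Since $A$ is simple, every completely bounded map preserves the only ideals $0$ and $A$, so $\cbb{A}=\icbb{A}$ and the hypothesis $\cbb{A}=\peb{A}$ says exactly that \emph{all} complete contractions on $A$ are pointwise approximable by elementary ones. By Corollary \ref{co23}(i) with $J=0$ this yields the single fact I shall exploit: every complete contraction $\phi$ on $A$ has $\kappa\phi\colon A\to M$ decomposable with $\dnorm{\kappa\phi}\le1$. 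Applying this to $\mathrm{id}_A\in\cbb{A}=\peb{A}$, I would fix a net of elementary complete contractions $\psi_\lambda\to\mathrm{id}_A$ in the point norm topology; each $\psi_\lambda$ has finite rank, and each $\kappa\psi_\lambda\colon A\to M$ is decomposable with $\dnorm{\kappa\psi_\lambda}\le1$.

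Next I would lift these finite rank maps to normal maps on $M$. For a finite rank complete contraction $u\colon A\to M$ that is decomposable with $\dnorm{u}\le1$, composing its bitranspose $u^{\sharp\sharp}\colon M\to\bdu{M}$ with the canonical normal projection $\bdu{M}\to M$ produces a normal finite rank map $\tilde{u}\colon M\to M$ extending $u$, still decomposable with $\dnorm{\tilde{u}}\le1$, since composing a decomposable map with a completely positive one does not increase the decomposable norm. Writing $\Psi_\lambda:=\widetilde{\kappa\psi_\lambda}$ I obtain normal finite rank maps $\Psi_\lambda\colon M\to M$ with $\dnorm{\Psi_\lambda}\le1$ and $\Psi_\lambda|_A=\kappa\psi_\lambda\to\kappa$ in point norm on $A$.

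The goal is then to promote the point norm convergence $\Psi_\lambda\to\mathrm{id}_M$ on the weak* dense subalgebra $A$ to point weak* convergence on all of $M$, with the decomposable bound intact. This is where local reflexivity is indispensable: the Effros--Haagerup local reflexivity of $A$, in the form $\bdu{(A\mtens F)}=\bdu{A}\mtens F$ for finite dimensional $F$ (equivalently, the approximation of finite dimensional subspaces $E\subseteq M$ by complete contractions $r\colon E\to A$ with $\kappa r\to\iota_E$ point weak*), lets me transfer the behaviour of $\Psi_\lambda$ from $A$ to arbitrary elements of $M$ tested against finitely many normal functionals. Once $\Psi_\lambda\to\mathrm{id}_M$ point weak* is established, I would use that the maps $M\to M$ which are decomposable with $\dnorm{\cdot}\le1$ form a point weak* closed set, because the completely positive witnesses range over the point weak* compact set of completely positive contractions into the von Neumann algebra $M$. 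Hence $\mathrm{id}_M$ is itself decomposable with $\dnorm{\mathrm{id}_M}\le1$, which by Haagerup's characterization of injectivity forces $M=\bdu{A}$ to be injective; therefore $A$ is nuclear.

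The main obstacle is precisely this last passage from $A$ to $\bdu{A}$: controlling the normal extensions $\Psi_\lambda$ simultaneously in the point weak* topology and in decomposable norm. The point norm convergence lives only on the weak* dense copy of $A$, and a naive iterated limit does not close; it is the finite dimensional control furnished by local reflexivity, together with the uniform bound $\cbnorm{\Psi_\lambda}\le\dnorm{\Psi_\lambda}\le1$ and the normality of each $\Psi_\lambda$, that legitimizes the diagonal limit while keeping the witnessing completely positive maps bounded. Verifying the point weak* closedness of the unit ball for $\dnorm{\cdot}$, and checking that the extension $u\mapsto\tilde{u}$ genuinely preserves the decomposable bound, are the remaining routine points.
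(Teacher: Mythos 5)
Your proposal has two fatal gaps.

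First, elementary operators are not finite rank: an element of $\e{A}$ is a finite sum of two-sided multiplications $x\mapsto\sum_i a_ixb_i$, and on an infinite dimensional $A$ such maps essentially never have finite rank (already $x\mapsto axa$ with $a$ invertible is bijective). So the net $(\psi_\lambda)$ you extract from $\mathrm{id}_A\in\peb{A}$ consists of maps that are in general not finite rank, and your construction of normal \emph{finite rank} maps $\Psi_\lambda$ on $M=\bdu{A}$ --- the only place where finite rank actually matters in your argument --- has no starting point. (Elementary operators do extend to normal maps on $\bdu{A}$, since the same coefficients define a weak*-continuous map; what fails is the finite rank claim.)

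Second, and more fundamentally, your endpoint is vacuous. The identity map of any von Neumann algebra is completely positive, so $\dnorm{\mathrm{id}_M}=1$ always, injective or not; likewise, applying Corollary \ref{co23}(i) to $\phi=\mathrm{id}_A$ only tells you $\dnorm{\kappa}\le1$, which is trivial because $\kappa$ is a $*$-homomorphism. Haagerup's characterization of injectivity is not ``the identity of $M$ is decomposable''; it requires a uniform bound $c$ with $\dnorm{\theta}\le c$ for \emph{all} complete contractions $\theta:\ell_{\infty}(n)\to M$ and all $n$, i.e.\ it concerns maps into $M$ from outside. Since you apply the hypothesis $\cbb{A}=\peb{A}$ only to $\mathrm{id}_A$, its nontrivial content is never used. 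The paper's proof targets exactly these test maps: it fixes a complete contraction $\theta:\ell_{\infty}(n)\to\bdu{A}$, uses local reflexivity to produce contractions $\theta_k:\ell_{\infty}(n)\to A$ with $\kappa\theta_k\to\theta$ point weak*, builds $\iota:\ell_{\infty}(n)\to A$ and $E:A\to\ell_{\infty}(n)$ with $E\iota=1$ from an element with infinite spectrum, and applies the hypothesis to the genuinely non-cp maps $\theta_kE\in\cbb{A}$. The remaining key step --- absent from your proposal --- is where simplicity is used beyond guaranteeing $\icbb{A}=\cbb{A}$: for primitive $A$ the natural map $A\hg A\to\cb{A}$ is completely isometric, so every elementary complete contraction is decomposable with $\dnorm{\cdot}=\cbnorm{\cdot}\le1$ and splits into four completely positive complete contractions; point weak* limits of these cp pieces give $\dnorm{\kappa\theta_kE}\le4$, hence $\dnorm{\theta}\le4$ uniformly in $n$, and Haagerup's criterion then yields injectivity of $\bdu{A}$.
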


\begin{proof} We may assume that $A$ is infinite dimensional, so that  $A$ contains an
element with infinite spectrum \cite[4.6.14]{KR}. Using a sequence of continuous functions
with disjoint supports, we can construct for each $n\in\bn$ complete contractions 
$\iota:\ell_{\infty}(n)\to A$ and $E:A\to \ell_{\infty}(n)$ such that $E\iota=1$ (see 
\cite[2.7]{Ha}). To prove that the W$^*$-envelope $\weakc{A}=\bdu{A}$ of $A$ is injective
(hence $A$ nuclear), it suffices to show that there exists a constant $c$ such that
$\dnorm{\theta}\leq c$ for all complete contractions $\theta:\ell_{\infty}(n)\to\weakc{A}$
and all $n$ \cite[2.1]{Ha}.  Let $n$ be fixed.

Since $A$ is locally reflexive there exists a net $(\theta_k)$ of complete contractions 
from $\ell_{\infty}(n)$ to $A$ such that the net $(\kappa\theta_k)$ converges to $\theta$
in the point weak* topology, where $\kappa:A\to \weakc{A}$ is the canonical inclusion.
Since $\peb{A}=\cbb{A}$ by assumption, for each $k$ there exists a net $(\phi_{kj})_j$ in $\eb{A}$ converging
pointwise to $\theta_kE$. Since $A$ is simple (hence primitive), the natural map 
$A\hg A\to\cb{A}$ is completely isometric \cite[5.4.13]{AM}, hence we have the completely isometric
inclusion $\e{A}\subseteq A\hg A$
and it follows that each $\phi_{kj}\in\eb{A}$ is decomposable
with $\dnorm{\phi_{kj}}=\cbnorm{\phi_{kj}}$. (To see that every $\phi\in\e{A}$ is decomposable, note that 
$\phi=a^*xb$, where $a$ and $b$ are fixed columns with the entries in $A$, and take in (\ref{d}) for $\psi_1$
and $\psi_2$ the maps defined by $\psi_1(x)=a^*xa$ and $\psi_2(x)=b^*xb$.) Thus
$\phi_{kj}=\sum_{\ell=0}^3i^{\ell}\phi_{kj\ell}$, where $\phi_{kj\ell}$ are elementary completely positive complete contractions
on $A$ (and $i=\sqrt{-1}$). If we let, for fixed $k$ and $\ell$,  $\psi_{k\ell}$ be the point 
weak* limit of a suitable subnet of 
$(\kappa\phi_{kj\ell})_j$, the maps $\psi_{k\ell}:A\to\weakc{A}$ are completely positive and
completely contractive and $\sum_{\ell=0}^3i^{\ell}\psi_{k\ell}=\kappa\theta_kE$.
This shows that $\kappa\theta_kE$ is decomposable with $\dnorm{\kappa\theta_kE}\leq4$.
Since $\iota$ is completely positive, it follows that $\kappa\theta_k$ 
($=\kappa\theta_kE\iota$)
is also decomposable with $\dnorm{\kappa\theta_k}\leq4$. Since the net $(\kappa\theta_k)$
converges to $\theta$ in the point weak* topology, it follows that $\theta$ is decomposable
with $\dnorm{\theta}\leq4$
\end{proof}

\begin{example}\label{ex26} Let $A=C^*_{\lambda}(\bff_2)$ be the reduced C$^*$-algebra of the free group
$G=\bff_2$ on two generators $g_1,\ g_2$ and let $\phi$ be the automorphism of $A$ that 
interchanges the generators: $\phi(L_{g_1})=L_{g_2}$, $\phi(L_{g_2})=L_{g_1}$, where for
$g\in G$ we denote by $L_g$ the translation operator $(L_g\xi)(h)=\xi(g^{-1}h)$ ($\xi\in
\ell_2(G)$. Since $A$ is simple \cite{D}, $\phi\in\icbb{A}$.  We will show that $\phi\notin
\peb{A}$. The same argument then shows that the weak* continuous extension $\weakc{\phi}$ of $\phi$ to the
von Neumann algebra $\weakc{A}=W^*(G)$ is not in $\peb{\weakc{A}}$, which is a much
stronger result than the classical fact that the automorphism $\weakc{\phi}$ is not inner
\cite[6.9.43]{KR}.
Recall that the commutant $\com{A}$ of $A$ in ${\rm B}(\ell_2(G))$ is the von Neumann
algebra generated by the right convolvers $R_g$ ($g\in G$), where $(R_g\xi)(h)=\xi(hg)$
($\xi\in\ell_2(G)$). Note that $\phi$ maps the elements $1,L_{g_1},L_{g_1}^*,L_{g_2}$ into
$1,L_{g_2},L_{g_2}^*,L_{g_1}$ (respectively). By Theorem
\ref{th21}, to show that $\phi\notin\peb{A}$,  it suffices to show that
\begin{equation}\label{28}\|1+L_{g_2}R_{g_2}+L_{g_2}^*R_{g_2}^*+L_{g_1}R_{g_1}\| > 
\|1+L_{g_1}R_{g_2}+L_{g_1}^*R_{g_2}^*+L_{g_2}R_{g_1}\|.
\end{equation}
The operator on the left side of (\ref{28}) is a sum of four unitary operators and maps
the unit vector $\delta_e$ ($e$ is the unit of $G$) to $4\delta_e$, hence the left side of
(\ref{28}) is equal to $4$. Suppose (to reach a contradiction) that (\ref{28}) does not
hold. Then, given $\varepsilon>0$, there exists a unit vector $\xi=\sum\xi(g)\delta_g$ in
$\ell_2(G)$ such that
\begin{equation}\label{29}\|(1+L_{g_1}R_{g_2}+L_{g_1}^*R_{g_2}^*+L_{g_2}R_{g_1})\xi\| > 
4-\frac{\varepsilon^2}{4}.
\end{equation}
On the left side of (\ref{29}) we have a sum of four unit vectors in a Hilbert space 
and it follows by an elementary  argument (see e.g. \cite[Exercise 20.2]{Pi})
that 
$$\|L_{g_1}R_{g_2}\xi-\xi\|<\varepsilon,\ \|L_{g_1}^*R_{g_2}^*\xi-\xi\| < \varepsilon\ 
\mbox{and}\ \|L_{g_2}R_{g_1}\xi-\xi\| <\varepsilon.$$
This means that 
\begin{multline}\label{210}\sum_{g\in G}|\xi(g_1^{-1}gg_2)-\xi(g)|^2<\varepsilon^2,\ 
\sum_{g\in G}|\xi(g_1gg_2^{-1})-\xi(g)|^2 <\varepsilon^2\\  
\mbox{and}\ \sum_{g\in G}|\xi(g_2^{-1}gg_1)-\xi(g)|^2<\varepsilon^2.
\end{multline}
Now we perform a similar (but not identical) computation as it is in \cite[IVX.3.9]{T}
(or \cite{KR}). For each subset $T$ of
 $G=\bff_2$ let 
 $$\mu(T)=\sum_{g\in T}|\xi(g)|^2.$$
 Let $S$ be the subset of $\bff_2$ that contains the unit  and all the reduced
 words $w\in\bff_2$ which end with $g_1^j$ with $j\ne0$. Note that $S\cup g_2^{-1}Sg_1=\bff_2$
 and that the sets $S$,
 $g_1^{-1}Sg_2$ and $g_1Sg_2^{-1}$ are disjoint. Therefore
 \begin{equation}\label{211}\mu(S)+\mu(g_2^{-1}Sg_1)\geq1\ \mbox{and}
 \end{equation}
 \begin{equation}\label{212}\mu(S)+\mu(g_1^{-1}Sg_2)+\mu(g_1Sg_2^{-1})\leq1.
 \end{equation}
 Since $|\|x\|-\|y\||\leq\|x-y\|$ for any two vectors $x,y\in\ell_2(S)$, we have
 \begin{equation}\label{213}|\mu(g_1^{-1}Sg_2)^{1/2}-\mu(S)^{1/2}|\leq(\sum_{g\in G}|\xi(g_1^{-1}gg_2)
 -\xi(g)|^2)^{1/2},
 \end{equation}
 hence the first inequality in (\ref{210}) implies that
 $$|\mu(g_1^{-1}Sg_2)-\mu(S)|=|\mu(g_1^{-1}Sg_2)^{1/2}-\mu(S)^{1/2}|
 |\mu(g_1^{-1}Sg_2)^{1/2}+\mu(S)^{1/2}|<2\varepsilon.$$
 Applying the same reasoning to  other two inequalities in (\ref{210}) we
  get
 \begin{equation}\label{214}|\mu(g_1^{-1}Sg_2)-\mu(S)|<2\varepsilon,\ 
 |\mu(g_1Sg_2^{-1})-\mu(S)|<2\varepsilon,\  |\mu(g_2^{-1}Sg_1)-\mu(S)|<2\varepsilon.
 \end{equation}
 The last of these three inequalities together with (\ref{211}) imply that
 $\mu(S)>\frac{1}{2}-\varepsilon$. On the other hand, the first two inequalities in (\ref{214})
 together with (\ref{212}) imply that $3\mu(S)-4\varepsilon<1$, and hence we have
 $(1/2)-\varepsilon<\mu(S)<(1+4\varepsilon)/3$. This is a contradiction 
 if $\varepsilon\leq1/14$.
 \end{example}

\section{Lifting and extensions}

Since it is not always easy to verify the conditions of Theorem \ref{th21}, 
we will present another  obstruction for approximation by 
elementary complete contractions.  As an application, we show that the Calkin algebra 
does not have the ECCAP. This fact seems interesting since Voiculescu's theorem
\cite[Theorem 4]{Ar}  (together with Paulsen's technique of  $2\times2$
matrices) implies that each
complete contraction on $\bh$ ($\h$ separable) which annihilates the ideal $\kh$ of compact
operators can be approximated pointwise by two sided multiplications $x\mapsto axb$ with
$a,b\in\bh$ contractive.

\begin{proposition}\label{pr32}Let $J\in\id{A}$, $B:=A/J$, $q:A\to B$ the quotient map,
$X$ a separable subspace of $A$ and $\psi\in\peb{B}$. Then there exists a sequence $(\phi_k)$
in $\eb{A}$ converging pointwise on $X$ to a complete contraction $\phi:X\to A$ such that
$q\phi=\psi q|X$. That is, the following diagram commutes:
$$\begin{array}{lll}
X&\stackrel{\phi}{\longrightarrow}&A\\
\downarrow q|X& &\downarrow q\\
B&\stackrel{\psi}{\longrightarrow}&B
\end{array}$$
\end{proposition}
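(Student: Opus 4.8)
The plan is to reduce the statement to a single approximating sequence and then run an inductive construction in which each step lifts one approximant of $\psi$ to an elementary complete contraction on $A$ and patches it onto the previous stage by means of a quasicentral approximate unit of $J$, the patching being arranged so that the values at the points already under control barely move. First I would fix a countable dense subset $\{x_1,x_2,\ldots\}$ of the unit ball of $X$. Since every member of $\eb B$ is a complete contraction and $\psi\in\peb B$, a standard argument using the uniform completely bounded bound produces a single sequence $(\psi_m)$ in $\eb B$ with $\psi_m(q(x))\to\psi(q(x))$ in norm for every $x\in X$.

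The heart of each step is an \emph{approximate} lift with the completely bounded norm under control: given a finite set $F\subseteq X$ and $\varepsilon>0$, I want $\chi\in\eb A$ (with coefficients in $A$, so that $\chi(A)\subseteq A$) such that $\cbnorm{\chi}\le1$ and $\|q\chi(x)-\psi(q(x))\|<\varepsilon$ for $x\in F$. Lifting the coefficients of a finite elementary representation of $\psi_m$ is useless here, because the Haagerup norm of such a representation may exceed $\cbnorm{\psi_m}$; so I would pass to second duals. Writing $\bdu J=\bdu A\,p$ for a central projection $p$, one has $\bdu B=\bdu A(1-p)$ as a direct summand of $\bdu A$, with $\bdu q$ the compression $x\mapsto x(1-p)$. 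The extension of $\psi_m$ to the von Neumann algebra $\bdu B$ has, by the theorem of Chatterjee and Smith already invoked in Theorem \ref{th21}, the form $x\mapsto\sum_jc_j^*xd_j$ with columns $(c_j),(d_j)$ of norm at most $1$ and $c_j,d_j\in\bdu B\subseteq\bdu A$. Reading the same coefficients inside $\bdu A$ produces a complete contraction on $\bdu A$ that lifts $\psi_m$ exactly; truncating the columns to finitely many terms and then using Kaplansky density to replace each coefficient by a norm-bounded element of $A$ gives elementary complete contractions in $\eb A$ whose induced maps on $B$ converge to $\psi_m q$ only in the point-weak topology. Since $\eb A$ is convex and $\chi\mapsto q\chi$ is affine, a Mazur argument on the finite set $F$ upgrades this to norm approximation by a suitable convex combination, still in $\eb A$; taking $m$ large then delivers the desired $\chi$.

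With such lifts available I would build $(\phi_k)$ by induction so that $\|\phi_{k+1}(x_i)-\phi_k(x_i)\|<2^{-k}$ for $i\le k$. Fix a quasicentral approximate unit $(u_\lambda)$ for $J$ in $A$. Given $\phi_k\in\eb A$ and a fresh lift $\chi\in\eb A$ with $q\chi(x_i)$ within a tiny tolerance of $q\phi_k(x_i)$ for $i\le k$, I set, for a $w:=u_\lambda$ chosen deep in the approximate unit, $\phi_{k+1}(a)=(1-w)^{1/2}\chi(a)(1-w)^{1/2}+w^{1/2}\phi_k(a)w^{1/2}$. The row $[(1-w)^{1/2}\ w^{1/2}]$ has norm $1$, so $\phi_{k+1}$ is again a complete contraction, and it is elementary; moreover $q\phi_{k+1}=q\chi$ since $q(w)=0$. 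Writing $\chi(x_i)-\phi_k(x_i)=j_i+r_i$ with $j_i\in J$ and $\|r_i\|$ small, quasicentrality makes $w^{1/2}$ almost commute with the finitely many $\phi_k(x_i)$, so the $\phi_k(x_i)$-contributions recombine to $\phi_k(x_i)$ up to a small error while $(1-w)^{1/2}j_i(1-w)^{1/2}\to0$ absorbs the ideal part; choosing $w$ deep enough and the tolerance small enough secures the estimate. Then $(\phi_k(x))$ is norm-Cauchy for every $x$ in the dense set, hence for every $x\in X$, and its limit $\phi$ is a complete contraction into $A$ with $q\phi(x)=\lim q\phi_k(x)=\psi(q(x))$, which is the asserted commuting diagram.

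I expect the main obstacle to be the completely bounded control in the lifting step: the cb-norm does not lift through $q$ in any naive way, which forces the passage to the bidual and the use of Chatterjee--Smith, and this in turn yields only weak approximations that must afterwards be converted to norm approximations (Mazur) and assembled without leaving $\eb A$ or disturbing the quotient (the quasicentral patching). Reconciling these three modes of convergence is where the real care lies.
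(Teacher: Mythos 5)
Your proof is correct in substance, but it takes a genuinely different route from the paper's. The paper avoids approximate lifts altogether: it first proves that the set ${\rm E}_1^h(B)$ of elementary operators represented by tensors of Haagerup norm $<1$ is point-norm dense in $\eb{B}$ (via the separation theorem \cite[1.1]{M4} combined with the implication (i)$\Rightarrow$(ii) of Theorem \ref{th21}), observes that every member of ${\rm E}_1^h(B)$ lifts \emph{exactly} to an element of $\eb{A}$ because rows and columns lift with norm control through $\matn{A}/\matn{J}=\matn{A/J}$, and then delegates the separable assembly step to the proof of Arveson's lifting theorem \cite{Ar}. You instead manufacture \emph{approximate} cb-controlled lifts by going up to the bidual (where $\bdu{B}$ is a central summand of $\bdu{A}$), applying Chatterjee--Smith \cite{CS} there, truncating, and descending by Kaplansky density plus convexity, after which you re-prove the Arveson-style quasicentral patching by hand. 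Both arguments ultimately rest on Chatterjee--Smith (the paper uses it inside the proof of Theorem \ref{th21}); yours trades the density-of-${\rm E}_1^h$ lemma and its appeal to \cite{M4} for heavier bidual bookkeeping, and buys a self-contained patching step. Three glossed points should be made precise. First, your ``Mazur argument'' is not literally Mazur's theorem, because the intermediate weak* limits (the images under $\bdu{q}$ of the truncated Chatterjee--Smith sums) lie in $\bdu{B}$, not in $B$; what you actually need is that a point of $B$ lying in the weak* closure, computed in $\bdu{B}$, of a convex subset $C$ of $B$ already lies in the norm closure of $C$ --- this holds by Hahn--Banach separation, since functionals in $\du{B}$ are weak* continuous on $\bdu{B}$, so the step is sound once stated this way. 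Second, Chatterjee--Smith yields columns of norm $<1$ only when the cb norm is $<1$, so replace $\psi_m$ by $(1-1/m)\psi_m$ before extending to $\bdu{B}$. Third, your patching coefficients $(1-w)^{1/2}$ lie in the unitization rather than in $A$, which is harmless by the conventions of the introduction (multiply through by an approximate unit of $A$); the paper does the same thing in the proof of Proposition \ref{pr33}.
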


\begin{proof} Let ${\rm E}_1^h(B)$ be the set of all $\psi\in\e{B}$ which can be represented
by elements $w\in B\otimes B$ with the Haagerup norm $\|w\|_h<1$. Since the Haagerup norm
dominates the c.b. norm, ${\rm E}_1^h(B)\subseteq\eb{B}$. Note that each $\psi\in {\rm E}_1^h(B)$
can be lifted to an operator $\phi\in\e{A}$ (that is, $q\phi=\psi q$) with $\cbnorm{\phi}<1$. 
Indeed, let $\psi$ be represented by a tensor  $w=\sumjn \dot{a}_j\otimes \dot{b}_j$, where
the row matrix $\dot{a}:=[\dot{a}_1,\ldots,\dot{a}_n]$ and the column matrix 
$\dot{b}:=[\dot{b}_1,\ldots,\dot{b}_n]^T$ (with the entries in $B$) have norms less than $1$.
Regarding $\dot{a}$ and $\dot{b}$ as elements of the C$^*$-algebra $\matn{B}=\matn{A/J}=
\matn{A}/\matn{J}$ (by completing with zero entries), we may lift $\dot{a}$ and $\dot{b}$ to
a suitable row $a$ and a column $b$, both with the entries in $A$, so that the corresponding
elementary operator $\phi(x):=\sumjn a_jxb_j$ on $A$ satisfies $\cbnorm{\phi}\leq\|a\|\|b\|
<1$. Clearly $\phi$ is a lift of $\psi$ (that is, $q\phi=\psi q$).

Now we claim that $\eb{B}\subseteq\overline{{\rm E}_1^h(B)}^{p.n.}$. Assume the contrary, that 
there exists a $\phi\in\eb{B}\setminus\overline{{\rm E}_1^h(B)}^{p.n.}$. Observe that in
the argument of the proof of the implication $(ii)\Rightarrow(i)$ in Theorem \ref{th21} we may 
replace the set $\eb{A}$ with ${\rm E}_1^h(A)$ 
since \cite[1.1]{M4} is still applicable. Then (in our present context
of the C$^*$-algebra $B$ instead of $A$) the argument shows that  there exist a 
representation $\pi$ of $B$ and finite
subsets $\{a_1,\ldots,a_n\}$ of $B$ and $\{\com{a}_1,\ldots,\com{a}_n\}$ of $\com{\pi(B)}$ such
that (\ref{23}) holds. But this is impossible since $\phi\in\eb{B}$.

For a general $\psi\in\peb{B}$, by the previous paragraph there exists a net $(\psi_k)$ in 
${\rm E}_1^h(B)$ converging pointwise to $\psi$. Then the net of maps $\psi_kq|X$ converges 
to $\psi q|X$ pointwise on $X$. By the first paragraph of the proof for each $\psi_k$ 
there exists $\phi_k\in\eb{A}$ such that $q\phi_k=\psi_kq$, hence in particular $q\phi_k|X=\psi_kq|X$.
Since $X$ is separable, it follows by the proof of Arveson 
lifting theorem \cite{Ar} (see also \cite[p. 425]{Pi}) that $\psi q|X$ has a completely
contractive lifting $\phi:X\to A$ such that  
$\phi$ is the pointwise limit of a sequence of operators
in $\eb{A}$ restricted to $X$.
\end{proof}

\begin{example}\label{ex34}Let $\h$ be a separable Hilbert space, $B:=\bh$, $K:=\kh$
the ideal of compact operators in $B$, $C:=B/K$ the Calkin algebra and $q:B\to C$ the
quotient map. Consider a copy $B_0$ of $\ell_{\infty}(n)$ ($n\geq3$) of infinite multiplicity
inside $B$, so that $q$ maps $B_0$ completely isometrically onto a C$^*$-subalgebra
$C_0$ of $C$. Let $E:C\to C_0$ be a completely contractive (completely positive)
projection (which exists since $C_0\cong\ell_{\infty}(n)$). Since by \cite[3.2]{Ha}
$\ell_{\infty}(n)$ 
does not have the $1$-operator  (local) lifting property in the sense of  
\cite[Section 16]{Pi}, by Ozawa's theorem \cite{O},  \cite[16.10]{Pi} there exists a complete contraction 
$\psi_0:C_0\to C$
which does not have any completely contractive lifting $C_0\to B$. 
Set $\psi:=\psi_0E$, a complete contraction on $C$. Then there is no
completely contractive lift $\phi:B_0\to B$ of $\psi_0=\psi|C_0$, otherwise 
$\phi(q|B_0)^{-1}:C_0\to B$ would be a completely contractive lift of $\psi_0$.
It follows now from Proposition \ref{pr32} that $\psi\notin\peb{C}$.
\end{example}

It is not hard to prove that an ideal of a C$^*$-algebra with the  ECCAP also has the ECCAP,
but the author does not know if this property passes to quotients.  Using Proposition
\ref{pr32} we can show that an extension by a nuclear
ideal of a C$^*$-algebra having ECCAP  has the ECCAP.

\begin{proposition}\label{pr33} Let $0\to J\to A\stackrel{q}{\to}B\to 0$ be an exact
sequence of C$^*$-algebras, where $A$ is separable and $J$ nuclear. Then, if $B$ has
the ECCAP, the same holds for $A$.
\end{proposition}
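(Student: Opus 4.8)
The plan is to combine the hypothesis on $B$ with the nuclearity of $J$ by splitting $\phi$ with a quasicentral approximate unit of $J$. Let $\phi\in\icbb{A}$; we must show $\phi\in\peb{A}$. Since $\phi$ preserves $J$ it induces $\phi_J\in\icbb{B}$, and as $B$ has the ECCAP we get $\phi_J\in\peb{B}$. Because $A$ is separable, Proposition \ref{pr32} (applied with $X=A$) then produces a complete contraction $\tilde\phi\colon A\to A$ that is the pointwise limit of a sequence $(\phi_k)$ in $\eb{A}$ and satisfies $q\tilde\phi=\phi_J q=q\phi$. Consequently the ``defect'' $\rho:=\phi-\tilde\phi$ maps $A$ into $J$. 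On the other hand $\phi$ restricts to a map $\phi|_J\in\icbb{J}$ (the ideals of $J$ are exactly the ideals of $A$ contained in $J$), so by Corollary \ref{co22} the nuclearity of $J$ gives $\phi|_J\in\peb{J}$; fix elementary complete contractions $\eta_j\in\eb{J}$ (with coefficients in $J$) converging pointwise to $\phi|_J$.

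First I would fix a finite set $a_1,\dots,a_m\in A$ and $\varepsilon>0$ and choose a quasicentral approximate unit $(u_\lambda)$ of $J$; write $v_\lambda=(1-u_\lambda)^{1/2}$ and $w_\lambda=u_\lambda^{1/2}$, so that $v_\lambda^2+w_\lambda^2=1$ and, by quasicentrality, $\phi(a)\approx v_\lambda\phi(a)v_\lambda+w_\lambda\phi(a)w_\lambda$. The two corners are handled separately. In the $v_\lambda$-corner, since $\rho(a_i)\in J$ we have $v_\lambda\rho(a_i)v_\lambda\to0$, whence $v_\lambda\phi(a_i)v_\lambda\approx v_\lambda\tilde\phi(a_i)v_\lambda\approx v_\lambda\phi_k(a_i)v_\lambda$ for $k$ large; this contributes the elementary complete contraction $a\mapsto v_\lambda\phi_k(a)v_\lambda$. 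In the $w_\lambda$-corner the aim is to realise $w_\lambda\phi(a_i)w_\lambda$ by $w_\lambda\eta_j(a_i)w_\lambda$: using quasicentrality to pull $w_\lambda$ through the (fixed) coefficients of $\eta_j$ one has $w_\lambda\eta_j(a_i)w_\lambda\approx\eta_j(w_\lambda a_iw_\lambda)\approx\phi|_J(w_\lambda a_iw_\lambda)$, because $w_\lambda a_iw_\lambda\in J$ and $\eta_j\to\phi|_J$ pointwise on $J$.

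I would then assemble the two corners into
\[
\Phi_\lambda(a)=v_\lambda\phi_k(a)v_\lambda+w_\lambda\eta_j(a)w_\lambda
=\begin{bmatrix} v_\lambda & w_\lambda\end{bmatrix}\begin{bmatrix}\phi_k(a)&0\\0&\eta_j(a)\end{bmatrix}\begin{bmatrix}v_\lambda\\ w_\lambda\end{bmatrix}.
\]
This $\Phi_\lambda$ is elementary (its coefficients are the products of $v_\lambda$, resp.\ $w_\lambda$, with those of $\phi_k$, resp.\ $\eta_j$, and all lie in $A$), and it is a genuine complete contraction: the diagonal map is completely contractive and $\|\begin{bmatrix} v_\lambda & w_\lambda\end{bmatrix}\|=\|v_\lambda^2+w_\lambda^2\|^{1/2}=1$. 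Granting the corner approximations above, $\Phi_\lambda(a_i)\approx v_\lambda\phi(a_i)v_\lambda+w_\lambda\phi(a_i)w_\lambda\approx\phi(a_i)$, which is the desired pointwise approximation by a member of $\eb{A}$; this would give $\phi\in\peb{A}$, i.e.\ the ECCAP for $A$.

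The \textbf{main obstacle} is the $w_\lambda$-corner identity $\phi|_J(w_\lambda aw_\lambda)\approx w_\lambda\phi(a)w_\lambda$, that is, the ``locality'' of the ideal-preserving map $\phi$ with respect to the quasicentral unit: one must move $\phi$ through the compression by $w_\lambda$ so that the corner of $\phi$ becomes $\phi|_J$ applied inside the nuclear ideal. The tempting shortcut of approximating the complete contraction $w_\lambda\phi(\cdot)w_\lambda\in\peb{A}$—which is available from Corollary \ref{co22} since its range lies in the nuclear ideal $J$—by an elementary $\Psi$ and then sandwiching does \emph{not} work: $w_\lambda\Psi w_\lambda$ double-compresses, giving $\approx u_\lambda\phi(a_i)u_\lambda$ and overshooting the partition of unity, whereas using $\Psi$ unsandwiched destroys complete contractivity. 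Establishing this corner identity, and then coordinating the order of the parameters ($\lambda$ large enough both for quasicentrality and for $v_\lambda\rho(a_i)v_\lambda$ to be small, and only afterwards $j$ and $k$ large), is where the real work lies; everything else reduces to the bookkeeping of the row--column contraction estimate and an appeal to Theorem \ref{th21}.
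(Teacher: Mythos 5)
Your skeleton (lifting a companion map $\phi_0=\tilde\phi$ through Proposition \ref{pr32}, noting $(\phi-\tilde\phi)(A)\subseteq J$, splitting along a quasicentral approximate unit, and the row--column assembly) is exactly the paper's, but the $w_\lambda$-corner is where your route breaks, and it breaks irreparably rather than for lack of detail: the identity you flag as the main obstacle, $w_\lambda\phi(a)w_\lambda\approx\phi|_J(w_\lambda a w_\lambda)$, is \emph{false} for general $\phi\in\icbb{A}$. Take $A$ the Toeplitz algebra, $J=\kh$, $\omega$ a state on $A$ vanishing on $J$ (a state of $C(\mathbb{T})$ composed with $q$), and $k_0\in J$ positive of norm one. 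Then $\phi(x):=\omega(x)k_0$ is a completely positive complete contraction preserving every ideal (each nonzero ideal of $A$ contains $K$, hence $k_0$), and this is a legitimate instance of the proposition ($A$ separable, $J$ nuclear, $B=C(\mathbb{T})$ nuclear, and indeed $\phi\in\peb{A}$ since $A$ is nuclear). Yet $\phi|_J=0$, while $w_\lambda\phi(a)w_\lambda\to\phi(a)=\omega(a)k_0\neq0$ whenever $\omega(a)\neq0$. Restricting $\phi$ to $J$ on the \emph{input} side discards precisely the part of $\phi$ that carries elements outside $J$ into $J$; your own chain would then demand simultaneously $w_\lambda\eta_j(a)w_\lambda\approx\phi|_J(w_\lambda aw_\lambda)=0$ and $w_\lambda\eta_j(a)w_\lambda\approx w_\lambda\phi(a)w_\lambda\approx\phi(a)\neq0$. (There is also a circularity in your parameter ordering: quasicentrality requires $\lambda$ large relative to the coefficients of $\eta_j$, while $j$ must be large relative to the $\lambda$-dependent elements $w_\lambda a_iw_\lambda$.)

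The repair is a two-index version of the ``shortcut'' you dismissed, and it is what the paper does: compress $\phi$ only on the \emph{output} side, keeping it defined on all of $A$. For each $m$ the map $x\mapsto e_m\phi(x)$ lies in $\icbb{A}$ and has range in the nuclear ideal $J$, so Corollary \ref{co22} gives $e_m\phi\in\peb{A}$; combined with $\phi_0\in\peb{A}$ and your row--column estimate, the map
$\theta_{km}(x)=\sqrt{1-e_k}\,\phi_0(x)\sqrt{1-e_k}+\sqrt{e_k}\,(e_m\phi(x))\sqrt{e_k}$
lies in $\peb{A}$. The double-compression problem you raised evaporates in a limit instead of having to cancel algebraically: since $\sqrt{e_k}\in J$ and $(e_m)$ is an approximate unit for $J$, we have $\phi(x)\sqrt{e_k}\in J$ and hence, letting $m\to\infty$ with $k$ fixed, $\theta_{km}(x)\to\theta_k(x)=\sqrt{1-e_k}\,\phi_0(x)\sqrt{1-e_k}+\sqrt{e_k}\,\phi(x)\sqrt{e_k}$; as $\peb{A}$ is point-norm closed, $\theta_k\in\peb{A}$. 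Finally $(\phi_0-\phi)(A)\subseteq J$ and quasicentrality give $\theta_k(x)\to\phi(x)$, so $\phi\in\peb{A}$. In short: your assembly and $v_\lambda$-corner are sound, but the input-side reduction to $\phi|_J$ cannot be fixed; the missing idea is output-side compression by an auxiliary approximate-unit index that is later absorbed.
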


\begin{proof} Let $\phi\in\icbb{A}$ and denote by $\psi$ the map on $B$ induced by $\phi$
so that $\psi q=q\phi$..
By Proposition \ref{pr32} there exists $\phi_0\in\peb{A}$ such that $q\phi_0=\psi q$.
Thus $q(\phi-\phi_0)=0$, hence $(\phi-\phi_0)(A)\subseteq J$. Let $(e_k)_{k\in\bk}$ be a 
quasicentral approximate unit in $J\subseteq A$ (see e. g. \cite{Ar} or \cite{D}). 
Since $e_k\phi(x)\in J$ for all $x\in A$ and $J$ is nuclear, by Corollary
\ref{co22} for each $m\in\bk$ the map $e_m\phi$ is in $\peb{A}$. Thus, both  $\phi_0$ and
$e_m\phi$ are in $\peb{A}$, hence the same must hold for the complete contraction
$$\theta_{km}(x):=\sqrt{1-e_k}\phi_0(x)\sqrt{1-e_k}+\sqrt{e_k}(e_m\phi(x))\sqrt{e_k}\ 
(x\in A)$$
for each $k\in\bk$. Since $\sqrt{e_k}\in J$ and $(e_m)$ is an approximate unit for $J$, 
$$\theta_k(x):=\lim_{m\to\infty}\theta_{km}(x)=\sqrt{1-e_k}\phi_0(x)\sqrt{1-e_k}+
\sqrt{e_k}\phi(x)\sqrt{e_k}\ \mbox{for each}\ x\in A,$$
so that $\theta_k\in\peb{A}$. But we can write 
$$\theta_k(x)=(\sqrt{1-e_k}\phi(x)\sqrt{1-e_k}+\sqrt{e_k}\phi(x)\sqrt{e_k})+\sqrt{1-e_k}
(\phi_0(x)-\phi(x))\sqrt{1-e_k}$$
and, since $(\phi_0-\phi)(x)\in J$ and the approximate unit $(e_k)$ is quasicentral in $A$,
it follows that $\lim_{k\to\infty}\theta_k(x)=\phi(x)$. Thus $\phi\in\peb{A}$.
\end{proof}

\end{document}